\theoremstyle{plain}
\newtheorem{thm}{Theorem}[section]
\newtheorem{lem}[thm]{Lemma}
\newtheorem{defn}[thm]{Definition}
\newtheorem{fact}[thm]{Fact}
\newtheorem{example}[thm]{Example}
\newtheorem{rem}[thm]{Remark}
\newcommand{\bijarrow}{\mathrel{\stackrel{1:1}{\longleftrightarrow}}}
\newcommand{\Ad}{\mathop{\mathrm{Ad}}\nolimits}
\newcommand{\Hom}{\mathop{\mathrm{Hom}}\nolimits}
\newcommand{\hyp}{\text{hyp}}
\newcommand{\R}{\mathbb{R}}
\newcommand{\diag}{\mathop{\mathrm{diag}}\nolimits}
\newcounter{mycounter}
\begin{document}

\title[Non virtually abelian discontinuous groups for $G/H$]
{Homogeneous space with non virtually abelian discontinuous groups but without any proper $SL(2,\mathbb{R})$-action}
\author{Takayuki Okuda}
\subjclass[2010]{Primary 57S30; Secondary 22F30, 22E40, 53C30, 53C35}
\keywords{proper action; symmetric space; discontinuous group; homogeneous space of reductive type}
\address{Department of Mathematics, Graduate School of Science, Hiroshima University 1-3-1 Kagamiyama, Higashi-Hiroshima, 739-8526 Japan}
\email{okudatak@hiroshima-u.ac.jp}
\date{}
\maketitle

\begin{abstract}
In the study of discontinuous groups for non-Riemannian homogeneous spaces,
the idea of 
``continuous analogue'' gives a powerful method (T.~Kobayashi [Math.~Ann.~1989]).
For example, a semisimple symmetric space $G/H$ admits 
a discontinuous group which is not virtually abelian
if and only if $G/H$ admits a proper $SL(2,\mathbb{R})$-action
(T.~Okuda [J.~Differential Geom.~2013]).
However, the action of discrete subgroups is not always approximated by
that of connected groups.
In this paper, we show that the theorem cannot be 
extended to general homogeneous spaces $G/H$ of reductive type.
We give a counterexample in the case $G = SL(5,\R)$.
\end{abstract}


\section{Introduction and statement of main results.}

It is notorious in pseudo-Riemannian geometry that
the action of isometric discrete groups is not always properly 
discontinuous.
A typical example is a homogeneous space $G/H$ with 
$G$, $H$ real reductive Lie groups
where any discrete subgroup $\Gamma$ of $G$ 
acts isometry on $G/H$ endowed with a pseudo-Riemannian metric
by the Killing form, 
but $\Gamma$ may not be a {\textit{discontinuous 
group for}} $G/H$
i.e.~$\Gamma$ may not act properly discontinuously on $G/H$.

By T.~Kobayashi \cite{Kobayashi89}, 
a homogeneous space $G/H$ of reductive type 
admits an infinite discontinuous group if and only if $G/H$ admits a proper action of a one-dimensional non-compact closed subgroup of $G$.
This idea of ``continuous analogue'' was the key to 
the proof \cite{Kobayashi89} of a necessary and sufficient condition for the Calabi--Markus phenomenon \cite{Calabi-Murkus62}.

In \cite[Theorem 1.3]{Okuda13cls},
by using Benoist's criterion \cite{Benoist96} and structure theory of semisimple symmetric spaces, 
we proved that a semisimple symmetric space $G/H$ admits a non virtually abelian properly discontinuous group if and only if $G/H$ admits a proper action of a three-dimensional simple subgroup $L$ of $G$ (i.e.~$L$ is either $SL(2,\mathbb{R})$ or $PSL(2,\R)$).
In general, the latter condition (``continuous analogue'') implies the former because $SL(2,\mathbb{R})$ contains non virtually abelian discrete subgroups.
However, it was left open whether or not the former condition (``discrete group'') implies the latter without the assumption that $G/H$ is a semisimple symmetric space.

Although important criteria for proper actions (\cite{Benoist96,Kobayashi89,Kobayashi96}) 
do not require that $G/H$ is a symmetric space,
we find that the expected equivalence does not hold for general non-symmetric spaces.
Here is our main theorem:

\begin{thm}\label{thm:main}
There exists a $3$-dimensional split abelian subgroup $H$ of $SL(5,\mathbb{R})$ satisfying the following$:$
\begin{enumerate}
\item \label{item:Main:nv} There exists a discrete subgroup $\Gamma$ of $SL(5,\mathbb{R})$ such that $\Gamma$ is not virtually abelian and the $\Gamma$-action on the homogeneous space $SL(5,\mathbb{R})/H$ is properly discontinuous.
\item For any Lie group homomorphism $\Phi: SL(2,\mathbb{R}) \rightarrow SL(5,\mathbb{R})$, the action of $SL(2,\mathbb{R})$ on the homogeneous space $SL(5,\mathbb{R})/ H$ via $\Phi$ is not proper.
\end{enumerate}
\end{thm}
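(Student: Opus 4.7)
Take the standard split Cartan subalgebra $\mathfrak{a} = \{x\in\R^5 : \sum x_i = 0\}$ of $\mathfrak{sl}(5,\R)$, with closed positive chamber $\overline{\mathfrak{a}_+} = \{x_1 \geq \cdots \geq x_5\}$ and Weyl group $W = S_5$. Since $\mathfrak{h}$ is required to be $3$-dimensional split abelian, up to conjugation we may take $\mathfrak{h} \subset \mathfrak{a}$ to be a hyperplane, i.e.\ $\mathfrak{h} = \{x \in \mathfrak{a} : \sum c_i x_i = 0\}$ for some $c = (c_1,\ldots,c_5)$ defined modulo $\R(1,\ldots,1)$. The plan is to construct $c$ explicitly and verify both assertions via the Kobayashi--Benoist criterion: a closed reductive subgroup $L \subset SL(5,\R)$ acts properly on $SL(5,\R)/H$ iff its Cartan projection $\mu(L) \subset \overline{\mathfrak{a}_+}$ meets the hyperplane arrangement $W\cdot\mathfrak{h} = \bigcup_{\sigma \in S_5}\sigma\cdot\mathfrak{h}$ only at the origin.

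Every non-trivial homomorphism $\Phi : SL(2,\R) \to SL(5,\R)$ is, up to conjugation, classified by a partition $\lambda$ of $5$, and the Cartan projection of $\Phi(SL(2,\R))$ is the ray $\R_{\geq 0}h_\lambda$ generated by the semisimple element of the associated $\mathfrak{sl}_2$-triple. Placing entries in decreasing order, the six non-trivial partitions give
\begin{align*}
h_{[5]} &= (4,2,0,-2,-4), & h_{[4,1]} &= (3,1,0,-1,-3), & h_{[3,2]} &= (2,1,0,-1,-2),\\
h_{[3,1,1]} &= (2,0,0,0,-2), & h_{[2,2,1]} &= (1,1,0,-1,-1), & h_{[2,1,1,1]} &= (1,0,0,0,-1).
\end{align*}
Thus condition (2) translates into the explicit combinatorial demand: for each of the six $h_\lambda$, there exists $\pi \in S_5$ with $\sum_i c_{\pi(i)} (h_\lambda)_i = 0$ (the trivial $\Phi$ gives a trivially non-proper action). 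For condition (1), it suffices to exhibit a non-compact reductive subgroup $L \subset SL(5,\R)$ with $\mu(L) \cap W\cdot\mathfrak{h} = \{0\}$, since any non virtually abelian discrete $\Gamma \subset L$ (which exists as $L$ is non-compact semisimple) then acts properly discontinuously on $SL(5,\R)/H$.

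The proof therefore breaks into three steps: (a) exhibit an explicit rational $c$ satisfying the six orthogonality conditions above; (b) identify a non-compact reductive $L \subset SL(5,\R)$ -- natural candidates being block-diagonal $SL(2,\R) \times SL(2,\R)$'s, copies of $SL(3,\R)$, or indefinite orthogonal groups $SO(p,q)$ acting reducibly on $\R^5$ -- whose Cartan cone avoids $W\cdot\mathfrak{h}\setminus\{0\}$; (c) pick a free non-abelian $\Gamma \subset L$. The main obstacle is the simultaneous feasibility of (a) and (b): the six orthogonality conditions cut $c$ down to a small locus in $\mathfrak{a}^*$, and for each candidate $c$ the resulting arrangement $W\cdot\mathfrak{h}$ (at most $120$ hyperplanes) must still leave an open subcone of $\overline{\mathfrak{a}_+}$ large enough to host $\mu(L)$ for some non-compact reductive $L$. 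Guessing the correct pair $(c,L)$ is the conceptual crux; once it is on the page, verification reduces to a finite case analysis over the six partitions and the $120$ Weyl translates, which is routine linear algebra.
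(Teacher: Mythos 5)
Your reduction of condition (2) to the six partition vectors $h_\lambda$ via Kobayashi's criterion is exactly the right (and the paper's) route, but you never exhibit the vector $c$, and more importantly your plan for condition (1) contains a fatal structural gap: steps (a) and (b) are not merely hard to reconcile, they are mutually exclusive. If $c$ satisfies the six orthogonality conditions of (a), then \emph{no} reductive subgroup $L$ of the kind you propose can act properly on $SL(5,\R)/H$. Indeed, for $L$ to contain a non virtually abelian discrete subgroup its semisimple part must be non-compact, hence $\mathfrak{l}$ contains an $\mathfrak{sl}(2,\R)$-subalgebra (Jacobson--Morozov), giving a homomorphism $\Phi\colon SL(2,\R)\to SL(5,\R)$ with image inside $L$; properness is inherited by closed subgroups, and since $\ker\Phi$ is finite, properness of the $L$-action would force the $SL(2,\R)$-action via $\Phi$ to be proper, contradicting (2). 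This is precisely the phenomenon the theorem is about: the discontinuous group in (1) cannot be obtained as a discrete subgroup of a connected reductive subgroup acting properly, so the ``continuous analogue'' strategy you use for (1) cannot succeed.

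The correct tool for (1) is Benoist's discrete criterion (Ann.\ of Math.\ 1996, Theorem 1.1): for $\mathfrak{b}_+=\{A\in\mathfrak{a}_+ \mid -w_0\cdot A=A\}$, a non virtually abelian (in fact Zariski-dense free) discontinuous group for $G/H$ exists if and only if $\mathfrak{b}_+\not\subset W\cdot\mathfrak{a}_\mathfrak{h}$; the group comes from Benoist's ping-pong construction, not from a connected subgroup. The paper takes $c=(6,6,1,-4,-9)$, which satisfies your six conditions, e.g.
\begin{align*}
\langle \diag(3,1,0,-1,-3), \diag(6,-9,-4,6,1) \rangle &= 0, &
\langle \diag(1,0,0,0,-1), \diag(6,-9,-4,1,6) \rangle &= 0,
\end{align*}
and similarly for $[3,2]$ and $[2^2,1]$ (the partitions $[5]$ and $[3,1^2]$ are scalar multiples of these), while for (1) it checks that $\diag(\sqrt2,1,0,-1,-\sqrt2)\in\mathfrak{b}_+$ is orthogonal to no permutation of $c$: orthogonality to an integer vector $\diag(a_1,\dots,a_5)$ forces $a_1=a_5$ and $a_2=a_4$ by $\mathbb{Z}$-linear independence of $\sqrt2$ and $1$, a symmetry no permutation of $(6,6,1,-4,-9)$ possesses. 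You should replace your steps (b)--(c) by this argument; your step for (2) then goes through once $c$ is specified.
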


Actually, 
we can construct $\Gamma$ in \eqref{item:Main:nv} of Theorem \ref{thm:main}
as a Zariski dense free subgroup in $SL(5,\mathbb{R})$,
by \cite[Section 7]{Benoist96}.

\section{Criterion of proper actions}\label{section:preliminary}

In this section, we recall results of T.~Kobayashi \cite{Kobayashi89} and Y.~Benoist \cite{Benoist96} in a way that we shall need.

Let $G$ be a linear reductive Lie group, namely, $G$ is a real form of a connected complex reductive Lie group $G_\mathbb{C}$, and $H$ a reductive subgroup of $G$.
We denote by $\mathfrak{g}$ and $\mathfrak{h}$ the Lie algebras of $G$ and $H$, respectively.
Let us take a maximally split abelian subspace $\mathfrak{a}$ of $\mathfrak{g}$.
Let $W$ denote the Weyl group for the restricted
root system $\Sigma$ of $(\mathfrak{g},\mathfrak{a})$.
Then any maximally split abelian subspace of $\mathfrak{h}$ can be transformed into a subspace of $\mathfrak{a}$ by an inner-automorphism of $\mathfrak{g}$.
We denote this subspace by $\mathfrak{a}_\mathfrak{h}$, which is uniquely determined up to the Weyl group $W$.
An analogous notation will be applied to another reductive subgroup $L$ of $G$.

We fix a simple system $\Pi$ of the restricted root system $\Sigma$ of $(\mathfrak{g},\mathfrak{a})$. 
We write $\mathfrak{a}_+$ for the closure of the dominant Weyl chamber,
and $w_0$ for the longest element in $W$ corresponding to the simple system $\Pi$.
Then the linear transformation $x \mapsto -w_0 \cdot x$ on $\mathfrak{a}$ leaves the closed Weyl chamber $\mathfrak{a}_+$ invariant.
We put 
\begin{align*}
\mathfrak{b}_+ := \{\, A \in \mathfrak{a}_+ \mid -w_0 \cdot A = A \,\}.
\end{align*}

\begin{example}\label{ex:aa+b+_for_SL5}
Let $G = SL(5,\mathbb{R})$,
we can take $\mathfrak{a}$, $\mathfrak{a}_+$ and $\mathfrak{b}_+$ as 
\begin{align*}
\mathfrak{a} &= \left\{  \diag(a_1,\dots,a_5)~\middle|~a_1,\dots,a_5 \in \mathbb{R},\ \sum_{i=1}^{5} a_i =0 \right\}, \\
\mathfrak{a}_+ &= \left\{ \diag(a_1,\dots,a_5) \in \mathfrak{a} \mid a_1 \geq a_2 \geq a_3 \geq a_4 \geq a_5  \right\}, \\
\mathfrak{b}_+ &= \left\{ \diag(b_1,b_2,0,-b_2,-b_1) \in \mathfrak{a}_+ \mid b_1,b_2 \in \mathbb{R},~b_1 \geq b_2 \right\},
\end{align*}
where the Weyl group $W$ is isomorphic to the symmetric group $\mathfrak{S}_5$ and acts on $\mathfrak{a}$ as permutations of the coordinates $a_1,\dots,a_5$.
\end{example}

A continuous action of a locally compact group $G$ on a locally compact topological space $X$ is called proper if $\{\, g \in G \mid gS \cap S \neq \emptyset \,\}$ is compact for any compact subset $S$ of $X$.
It is properly discontinuous if $L$ is discrete.

The next criterion is proved by T.~Kobayashi in \cite{Kobayashi89}:
\begin{fact}[{\cite[Theorem 4.1]{Kobayashi89}}]\label{fact:Kobayashi89}
Let $H$, $L$ be reductive subgroups of a linear reductive Lie group $G$.
Then the following two conditions on $(G,H,L)$ are equivalent$:$
\begin{enumerate}
\item The $L$-action on $G/H$ is proper.
\item $\mathfrak{a}_\mathfrak{h} \cap W \cdot \mathfrak{a}_\mathfrak{l} \neq \{0\}$.
\end{enumerate}
\end{fact}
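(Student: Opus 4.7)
The plan is to exhibit an explicit candidate $\mathfrak{a}_\mathfrak{h}$ inside the $\mathfrak{a}$ of Example \ref{ex:aa+b+_for_SL5}, to verify condition~(2) by combining Fact \ref{fact:Kobayashi89} with the elementary fact that weights of any finite-dimensional $\mathfrak{sl}(2,\mathbb{R})$-module are symmetric about $0$, and to obtain the $\Gamma$ of condition~(1) by combining the hyperplane structure of $W \cdot \mathfrak{a}_\mathfrak{h}$ with the construction of Zariski-dense free subgroups in \cite[Section~7]{Benoist96} together with the Kobayashi--Benoist properness criterion for discrete subgroups.

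Concretely, I would set
\begin{equation*}
\mathfrak{a}_\mathfrak{h} := \{\, \diag(a_1, \dots, a_5) \in \mathfrak{a} \mid a_1 + a_2 = a_3 + a_4 \,\}
\end{equation*}
and $H := \exp(\mathfrak{a}_\mathfrak{h})$, which is manifestly a $3$-dimensional split abelian subgroup of $SL(5,\mathbb{R})$. Because $\mathfrak{a}_\mathfrak{h}$ already lies in $\mathfrak{a}$, it plays the role of the object denoted $\mathfrak{a}_\mathfrak{h}$ in Section~\ref{section:preliminary}, and $W \cdot \mathfrak{a}_\mathfrak{h}$ is the finite union of hyperplanes $a_i + a_j = a_k + a_l$ indexed by pair-partitions of $4$-element subsets of $\{1, \dots, 5\}$.

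For part~(2), any non-trivial homomorphism $\Phi: SL(2,\mathbb{R}) \to SL(5,\mathbb{R})$ makes $\mathbb{R}^5$ into a finite-dimensional $\mathfrak{sl}(2,\mathbb{R})$-module, so the image of $\diag(1,-1)$ is, up to conjugation, a diagonal matrix whose five eigenvalues form a multiset summing to $0$ and symmetric about $0$. Any such multiset can be reordered as $(\alpha, -\alpha, \beta, -\beta, 0)$ (possibly with $\alpha$ or $\beta$ equal to $0$ when zeros repeat), and this reordering plainly satisfies $a_1 + a_2 = a_3 + a_4$, i.e.\ it lies in $\mathfrak{a}_\mathfrak{h}$. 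Hence $W \cdot \mathfrak{a}_\mathfrak{l} \cap \mathfrak{a}_\mathfrak{h} \neq \{0\}$, and Fact~\ref{fact:Kobayashi89} rules out properness. (The case of trivial $\Phi$ is disposed of separately: a trivial action of a non-compact group on a non-compact manifold is never proper.)

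For part~(1), the complement of the finite hyperplane arrangement $W \cdot \mathfrak{a}_\mathfrak{h}$ in the open chamber $\mathfrak{a}_+^{\circ}$ is non-empty and contains an open cone $\mathcal{C}$. Applying \cite[Section~7]{Benoist96}, one obtains a Zariski-dense free subgroup $\Gamma \subset SL(5,\mathbb{R})$ of rank at least $2$ whose limit cone (the asymptotic cone of its Cartan projection) is contained in $\mathcal{C}$, and thus meets $W \cdot \mathfrak{a}_\mathfrak{h}$ only at $0$; the Kobayashi--Benoist criterion then gives that $\Gamma$ acts properly discontinuously on $SL(5,\mathbb{R})/H$, and a free group of rank $\geq 2$ is not virtually abelian. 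I expect the main obstacle to be the choice of $\mathfrak{a}_\mathfrak{h}$ itself: it must be simultaneously thin enough that $W \cdot \mathfrak{a}_\mathfrak{h}$ fails to cover $\mathfrak{a}_+^{\circ}$, yet, after $W$-translation, thick enough to catch every possible $\mathfrak{a}_\mathfrak{l} \subset \mathfrak{b}_+$; the symmetric-multiset observation is what makes the single linear form $a_1 + a_2 - a_3 - a_4$ meet both requirements at once and avoids a case-by-case check over the six non-trivial partitions of $5$ (equivalently, the six non-trivial nilpotent orbits of $\mathfrak{sl}(5,\mathbb{R})$).
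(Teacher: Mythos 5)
Your proposal does not address the statement it is supposed to prove. Fact \ref{fact:Kobayashi89} is Kobayashi's properness criterion for a triple $(G,H,L)$ of reductive subgroups, quoted from \cite[Theorem 4.1]{Kobayashi89}; the paper gives no proof of it, and neither do you --- you invoke it as a black box. What you have actually sketched is a proof of Theorem \ref{thm:main}, i.e.\ the construction of a specific $H\subset SL(5,\mathbb{R})$. A proof of Fact \ref{fact:Kobayashi89} would instead have to work with the Cartan decomposition $G=K(\exp\mathfrak{a}_+)K$ and the Cartan projection, showing that properness of the $L$-action is governed by how $\mathfrak{a}_{\mathfrak{h}}$ and $W\cdot\mathfrak{a}_{\mathfrak{l}}$ intersect. (Incidentally, condition (2) as printed should read $\mathfrak{a}_{\mathfrak{h}}\cap W\cdot\mathfrak{a}_{\mathfrak{l}}=\{0\}$ for equivalence with properness; the version with $\neq$ is the negation, as one sees from Fact \ref{fact:Kobayashi_sl2} and from the compact case $\mathfrak{a}_{\mathfrak{l}}=0$. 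Your own application of the fact silently uses the corrected form.)

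Even judged as a proof of Theorem \ref{thm:main}, your construction fails at part (1). Your $\mathfrak{a}_{\mathfrak{h}}$ is the orthogonal complement of $v=\diag(1,1,-1,-1,0)$, so $W\cdot\mathfrak{a}_{\mathfrak{h}}=\bigcup_{\sigma\in W}(\sigma v)^{\perp}$. Taking the permutation $\sigma v=\diag(1,-1,0,-1,1)$ gives
\[
\langle \diag(r_1,r_2,0,-r_2,-r_1),\ \diag(1,-1,0,-1,1)\rangle = r_1-r_2+r_2-r_1=0
\]
for all $r_1\geq r_2\geq 0$, hence $\mathfrak{b}_+\subset W\cdot\mathfrak{a}_{\mathfrak{h}}$, and by Fact \ref{fact:Benoist} \emph{no} non virtually abelian discrete subgroup acts properly discontinuously on $SL(5,\mathbb{R})/H$ for your $H$. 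The gap in your argument for part (1) is the assumption that the limit cone of a Zariski-dense free group can be placed inside an arbitrary open subcone of $\mathfrak{a}_+$ avoiding $W\cdot\mathfrak{a}_{\mathfrak{h}}$; Benoist's theorem says precisely that the limit cone of a non virtually abelian discrete group must meet $\mathfrak{b}_+\setminus\{0\}$, which is why $\mathfrak{b}_+$ rather than $\mathfrak{a}_+$ appears in Fact \ref{fact:Benoist}. This is exactly the tension you flag in your final sentence, and your single linear form resolves it on the wrong side: it is ``too symmetric'' and catches all of $\mathfrak{b}_+$ along with the points $A_\phi$. The paper's choice of $\mathfrak{h}=\diag(6,6,1,-4,-9)^{\perp}$, a vector with only one repeated entry, is designed so that orthogonality of $\diag(r_1,r_2,0,-r_2,-r_1)$ to $\sigma\diag(6,6,1,-4,-9)$ with $r_1,r_2$ linearly independent over $\mathbb{Z}$ would force two disjoint pairs of equal entries (impossible), while the specific values are tuned so that every $A_\phi$ in Table \ref{table:sl5} is still annihilated by some permutation.
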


Let $\phi$ be a Lie algebra homomorphism $\phi : \mathfrak{sl}(2,\mathbb{R}) \rightarrow \mathfrak{g}$. 
We denote by $\mathcal{O}^{\text{hyp}}_{\phi}$ the adjoint orbit through $\phi \begin{pmatrix} 1 & 0 \\ 0 & -1 \end{pmatrix}$ in $\mathfrak{g}$.
Then it is known that $\mathcal{O}^{\hyp}_{\phi}$ and $\mathfrak{a}_+$ meet at one point.

\begin{defn}\label{defn:Aphi}
For a Lie algebra homomorphism $\phi : \mathfrak{sl}(2,\mathbb{R}) \rightarrow \mathfrak{g}$,
we denote by $A_\phi$ the unique element in $\mathfrak{a}_+ \cap \mathcal{O}^{\text{hyp}}_\phi$. 
\end{defn}

The next fact for the proper actions of $SL(2,\mathbb{R})$ follows from Fact \ref{fact:Kobayashi89}:

\begin{fact}[Corollary to Fact \ref{fact:Kobayashi89}]\label{fact:Kobayashi_sl2}Let $G$ be a linear reductive Lie group, $H$ a reductive subgroup of $G$, 
and $\Phi : SL(2,\mathbb{R}) \rightarrow G$ a Lie group homomorphism.
We denote by $\phi : \mathfrak{sl}(2,\mathbb{R}) \rightarrow \mathfrak{g}$ 
the differential of $\Phi$,
and take the element $A_\phi$ in $\mathfrak{a}_+$ described above.
Then the following conditions on $\Phi$ are equivalent$:$
\begin{enumerate}
\item The $SL(2,\mathbb{R})$-action on $G/H$ via $\Phi$ is proper.
\item The element $A_{\phi}$ is not in $W \cdot \mathfrak{a}_\mathfrak{h}$.
\end{enumerate}
\end{fact}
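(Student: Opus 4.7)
My plan is to define $H$ by specifying its Lie algebra as the codimension-one subspace
\[
\mathfrak{h} := \bigl\{\,\diag(a_1,a_2,0,a_4,a_5) \mid a_1+a_2+a_4+a_5=0\,\bigr\}
\]
of the Cartan $\mathfrak{a}$ in Example \ref{ex:aa+b+_for_SL5}, so that $H := \exp(\mathfrak{h})$ is a three-dimensional split abelian (hence reductive) subgroup of $SL(5,\R)$ and $\mathfrak{a}_\mathfrak{h} = \mathfrak{h}$ is the hyperplane $\{a_3 = 0\}$ inside $\mathfrak{a}$. Then
\[
W \cdot \mathfrak{a}_\mathfrak{h} \;=\; \bigcup_{i=1}^{5}\{A \in \mathfrak{a} \mid a_i = 0\},
\]
a union of five coordinate hyperplanes.

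For part (2), I would invoke Fact \ref{fact:Kobayashi_sl2}: it suffices to check $A_\phi \in W\cdot\mathfrak{a}_\mathfrak{h}$ for every Lie algebra homomorphism $\phi : \mathfrak{sl}(2,\R) \to \mathfrak{sl}(5,\R)$. By Jacobson--Morozov, nonzero $\phi$ are classified up to conjugacy by partitions $\lambda$ of $5$, and the eigenvalue multiset of $A_\phi$ is $\bigcup_i \{\lambda_i-1,\lambda_i-3,\ldots,-(\lambda_i-1)\}$. This multiset of five real numbers is symmetric about $0$; since $5$ is odd, its median must equal $0$. Hence in the dominant ordering $A_\phi$ always has third entry $0$, so $A_\phi \in \mathfrak{a}_\mathfrak{h}$; the case $\phi = 0$ gives $A_\phi = 0$ trivially.

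For part (1), I would note that the complement
\[
\Omega := \mathfrak{a}_+ \setminus W \cdot \mathfrak{a}_\mathfrak{h}
\]
contains the open cone of strictly descending sequences $a_1 > a_2 > a_3 > a_4 > a_5$ with every $a_i$ nonzero; this is nonempty since it contains $\diag(4,3,1,-2,-6)$. By Benoist's construction of Zariski dense free subgroups with prescribed limit cone (\cite[Section 7]{Benoist96}), one obtains a Zariski dense free subgroup $\Gamma \subset SL(5,\R)$ of rank $\geq 2$ whose Benoist limit cone is contained in $\Omega$. Benoist's properness criterion \cite{Benoist96} for discrete subgroups then guarantees that the $\Gamma$-action on $SL(5,\R)/H$ is properly discontinuous, and $\Gamma$ is plainly not virtually abelian.

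The heart of the construction is choosing $\mathfrak{a}_\mathfrak{h}$ to contain a $W$-translate of every possible $A_\phi$ while still leaving an open subcone of $\mathfrak{a}_+$ uncovered by $W\cdot\mathfrak{a}_\mathfrak{h}$. The essential numerical coincidence is that because $5$ is odd, every Jacobson--Morozov hyperbolic element in $\mathfrak{sl}(5,\R)$ has zero median entry, so the hyperplane $\{a_3=0\}$ absorbs them all; this trick would fail for even $n$, which is why the example is peculiar to $SL(5,\R)$. The step I expect to require the most care is the appeal in part (1) to the discrete-group form of Benoist's properness criterion together with his control of limit cones for Zariski dense subgroups; no continuous subgroup $L$ of rank $\geq 2$ could replace $\Gamma$, since any $2$-dimensional $\mathfrak{a}_\mathfrak{l} \subset \mathfrak{a}$ must meet each $3$-dimensional hyperplane $\{a_i = 0\}$ nontrivially by dimension count, forcing $\mathfrak{a}_\mathfrak{l}\cap W\cdot\mathfrak{a}_\mathfrak{h}\neq\{0\}$.
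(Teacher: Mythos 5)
Your proposal does not address the statement you were asked to prove. The statement is Fact \ref{fact:Kobayashi_sl2}, an equivalence valid for an arbitrary linear reductive $G$ and reductive $H$: the $SL(2,\R)$-action via $\Phi$ is proper if and only if $A_\phi \notin W \cdot \mathfrak{a}_\mathfrak{h}$. What you have written is instead an attempted construction of the counterexample of Theorem \ref{thm:main} for $G = SL(5,\R)$, and in the course of it you \emph{invoke} Fact \ref{fact:Kobayashi_sl2} as a known tool, so read as a proof of that fact your argument is circular. The intended derivation is short: apply Kobayashi's criterion (Fact \ref{fact:Kobayashi89}) to the reductive subgroup $L = \Phi(SL(2,\R))$, noting that $\ker\Phi$ is finite when $\Phi$ is nontrivial, so properness of the action via $\Phi$ is equivalent to properness of the $L$-action. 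If $\phi \neq 0$, a maximally split abelian subspace of $\mathfrak{l}$ is spanned by $\phi \begin{pmatrix} 1 & 0 \\ 0 & -1 \end{pmatrix}$, so one may take $\mathfrak{a}_\mathfrak{l} = \R A_\phi$; since $\mathfrak{a}_\mathfrak{h}$ is a linear subspace, the condition $\mathfrak{a}_\mathfrak{h} \cap W \cdot \R A_\phi \neq \{0\}$ in Fact \ref{fact:Kobayashi89} holds exactly when $A_\phi \in W \cdot \mathfrak{a}_\mathfrak{h}$. If $\phi = 0$ then $\Phi$ is trivial, the action of the noncompact group $SL(2,\R)$ is not proper, and correspondingly $A_\phi = 0 \in W \cdot \mathfrak{a}_\mathfrak{h}$.

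For what it is worth, the construction you do carry out would also fail as a proof of Theorem \ref{thm:main}. With $\mathfrak{a}_\mathfrak{h} = \{\, a \in \mathfrak{a} \mid a_3 = 0 \,\}$ one has $W \cdot \mathfrak{a}_\mathfrak{h} = \bigcup_{i=1}^{5} \{a_i = 0\}$, and every element of $\mathfrak{b}_+ = \{\diag(b_1,b_2,0,-b_2,-b_1)\}$ has vanishing middle entry, so $\mathfrak{b}_+ \subset \mathfrak{a}_\mathfrak{h} \subset W \cdot \mathfrak{a}_\mathfrak{h}$. By Benoist's criterion (Fact \ref{fact:Benoist}) this means $SL(5,\R)/H$ admits \emph{no} non virtually abelian discontinuous group for your choice of $H$: nonemptiness of $\mathfrak{a}_+ \setminus W \cdot \mathfrak{a}_\mathfrak{h}$ is not sufficient, because the limit cone of a Zariski dense non virtually abelian subgroup is a convex cone stable under $x \mapsto -w_0 \cdot x$ and is therefore forced to meet $\mathfrak{b}_+$ nontrivially. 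This is precisely why the paper takes $\mathfrak{h}$ to be the orthogonal complement of $\diag(6,6,1,-4,-9)$: that hyperplane still contains a $W$-translate of every $A_\phi$, yet misses the ray through $\diag(\sqrt{2},1,0,-1,-\sqrt{2}) \in \mathfrak{b}_+$.
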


Let us denote by $\Hom(\mathfrak{sl}(2,\mathbb{R}),\mathfrak{g})$ 
the set of all Lie algebra homomorphisms 
from $\mathfrak{sl}(2,\mathbb{R})$ to $\mathfrak{g}$.
By Fact \ref{fact:Kobayashi_sl2}, 
if the subset $\{\, A_\phi \mid \phi \in \Hom(\mathfrak{sl}(2,\mathbb{R}),\mathfrak{g}) \,\}$ 
of $\mathfrak{a}_+$ is contained in $W \cdot \mathfrak{a}_\mathfrak{h}$,
then for any Lie group homomorphism $\Phi : SL(2,\mathbb{R}) \rightarrow G$, 
the $SL(2,\mathbb{R})$-action on $G/H$ via $\Phi$ is not proper.

\begin{rem}
Let us define an equivalent relation on $\Hom(\mathfrak{sl}(2,\mathbb{R}),\mathfrak{g})$ by 
$\phi \sim \phi'$ if there exists $l \in SL(2,\mathbb{R})$ and $g \in G$ such that 
$\phi' = \Ad_\mathfrak{g}(g) \circ \phi \circ \Ad_{\mathfrak{sl}(2,\mathbb{R})}(l)$.
Then we have a natural surjection 
from $\Hom(\mathfrak{sl}(2,\mathbb{R}),\mathfrak{g})/{\sim}$ to the set  
$\{\, A_\phi \mid \phi \in \Hom(\mathfrak{sl}(2,\mathbb{R}), \mathfrak{g}) \,\}$.
We remark that the map may not be injective 
$($See \cite[\text{Chapter }9]{Collingwood-McGovern93} for more details$)$. 
However, by combining \cite[Proposition 4.5 $(iii)$]{Okuda13cls} with 
the Jacobson--Morozov theorem and results of Kostant \cite{Kostant59} and Malcev \cite{Malcev50}, 
we see that there exists a bijection below$:$
\begin{align*}
\{\, A_\phi \mid \phi &\in \Hom(\mathfrak{sl}(2,\mathbb{R}), \mathfrak{g}) \,\} \\
&\bijarrow \{ \text{Complex nilpotent orbits $\mathcal{O}$ in $\mathfrak{g}_\mathbb{C}$ such that $\mathcal{O} \cap \mathfrak{g} \neq \emptyset$} \}.
\end{align*}
\end{rem}

The next fact for the existence of properly discontinuous actions of a non virtually abelian discrete group is proved by Y.~Benoist in \cite{Benoist96}:

\begin{fact}[{\cite[Theorem 1.1]{Benoist96}}]\label{fact:Benoist}
Let $G$ be a linear reductive Lie group, $H$ a reductive subgroup of $G$.
The following conditions on $(G,H)$ are equivalent$:$
\begin{enumerate}
\item There exists a non virtually abelian discrete subgroup $\Gamma$ of $G$ such that the $\Gamma$-action on $G/H$ is properly discontinuous.
\item $\mathfrak{b}_+ \not \subset W \cdot \mathfrak{a}_\mathfrak{h}$.
\end{enumerate}
\end{fact}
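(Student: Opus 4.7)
The plan is to combine a Cartan-projection refinement of Fact~\ref{fact:Kobayashi89} with Benoist's structure theorem on limit cones and its constructive converse via Schottky groups. Write $\mu \colon G \to \mathfrak{a}_+$ for the Cartan projection coming from the $KAK$ decomposition, and, for $\Gamma \subset G$ discrete, define the limit cone $\mathcal{L}_\Gamma \subset \mathfrak{a}_+$ as the set of accumulation points of $\mu(\gamma_n)/\|\mu(\gamma_n)\|$ along escaping sequences $\gamma_n$. The first step is to invoke the extension of Kobayashi's criterion to discrete subgroups: for reductive $H$ and discrete $\Gamma$, the $\Gamma$-action on $G/H$ is properly discontinuous if and only if $\mathcal{L}_\Gamma \cap W \cdot \mathfrak{a}_\mathfrak{h} = \{0\}$. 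Note also the identity $\mu(\gamma^{-1}) = -w_0\,\mu(\gamma)$, so that $\mathcal{L}_\Gamma$ is always stable under $-w_0$.

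For $(1)\Rightarrow(2)$, I would apply the Tits alternative to the non virtually abelian discrete $\Gamma$ to extract a free subgroup $F \subset \Gamma$ of rank two. Benoist's limit cone theorem, applied inside the Zariski closure of $F$, yields a nontrivial closed convex cone $\mathcal{L}_F \subset \mathcal{L}_\Gamma$. Pick $v \in \mathcal{L}_F \setminus \{0\}$ outside the central part of $\mathfrak{a}$, which exists because $F$ has nontrivial image in the semisimple part of $G$. The key averaging step is then: $w := \tfrac12\bigl(v + (-w_0)v\bigr) \in \mathcal{L}_F$ by convexity and $(-w_0)$-invariance, $w \in \mathfrak{b}_+$ by construction, and $w \neq 0$ because $w = 0$ would force $v \in \mathfrak{a}_+ \cap (-\mathfrak{a}_+)$, pushing $v$ into the center. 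The properness criterion then gives $w \notin W \cdot \mathfrak{a}_\mathfrak{h}$, hence $\mathfrak{b}_+ \not\subset W \cdot \mathfrak{a}_\mathfrak{h}$.

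For $(2)\Rightarrow(1)$, I would pick $v_0 \in \mathfrak{b}_+ \setminus W \cdot \mathfrak{a}_\mathfrak{h}$. Since $W \cdot \mathfrak{a}_\mathfrak{h}$ is a finite union of proper linear subspaces and the relative interior of $\mathfrak{b}_+$ meets the open Weyl chamber, I may assume $v_0$ is regular, and then choose an open cone $\mathcal{C} \subset \mathfrak{a}$ around $\mathbb{R}_{>0} v_0$ with $\overline{\mathcal{C}} \cap W \cdot \mathfrak{a}_\mathfrak{h} = \{0\}$. The decisive input is Benoist's Schottky construction \cite[Section~7]{Benoist96}: for any regular direction $v_0 \in \mathfrak{a}_+$ and any open conic neighborhood $\mathcal{C}$ of $\mathbb{R}_{>0} v_0$, there exists a Zariski dense free subgroup $\Gamma \subset G$ of rank at least two with $\mathcal{L}_\Gamma \subset \mathcal{C}$. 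Such $\Gamma$ is non virtually abelian, and by the properness criterion its action on $G/H$ is properly discontinuous.

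The hard part is the Schottky construction in $(2)\Rightarrow(1)$: producing group elements whose Cartan projections of every reduced word lie in the prescribed narrow cone demands careful proximality and contraction estimates on highest weight representations of $G$ together with a transversality analysis of attracting and repelling flag configurations. The direction $(1)\Rightarrow(2)$ is comparatively formal once one has the limit cone theorem and the Tits alternative, with the $(-w_0)$-symmetrization being the decisive trick that pushes a generic limit direction into $\mathfrak{b}_+$.
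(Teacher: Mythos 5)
The paper does not prove this statement at all: it is imported verbatim as \cite[Theorem 1.1]{Benoist96}, so what you have written is a reconstruction of Benoist's argument rather than something to be checked against a proof in this paper. Your overall architecture (properness via Cartan projections, limit cones, the $-w_0$-symmetrization, and the Schottky construction for the converse) is indeed the right skeleton, and your direction $(2)\Rightarrow(1)$ is essentially correct modulo the Schottky construction you explicitly defer to \cite[Section 7]{Benoist96} --- which is also exactly what this paper relies on when it remarks that $\Gamma$ can be taken Zariski dense and free.

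The genuine gap is in $(1)\Rightarrow(2)$, and it sits in the very first step: the criterion you invoke is not an equivalence. What Kobayashi and Benoist prove is that the $\Gamma$-action on $G/H$ is properly discontinuous if and only if $\mu(\Gamma)$ \emph{drifts away} from $W \cdot \mathfrak{a}_\mathfrak{h}$, i.e.\ $d(\mu(\gamma), W \cdot \mathfrak{a}_\mathfrak{h}) \to \infty$ as $\gamma \to \infty$ in $\Gamma$. This is a metric condition; $\mathcal{L}_\Gamma \cap W \cdot \mathfrak{a}_\mathfrak{h} = \{0\}$ is an angular condition that is sufficient but not necessary for it (the distance can go to infinity sublinearly while the normalized directions converge into $W \cdot \mathfrak{a}_\mathfrak{h}$). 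Consequently, from ``the $F$-action is proper'' and ``$w \in \mathcal{L}_F \setminus \{0\}$'' you cannot conclude $w \notin W \cdot \mathfrak{a}_\mathfrak{h}$, and your symmetrized point $w = \tfrac12(v + (-w_0)v)$ is in general only an asymptotic direction, not something realized at bounded distance by actual Cartan projections. Benoist closes exactly this gap by working with group elements rather than with the limit cone: he chooses an $\mathbb{R}$-regular $\gamma$ and a conjugate $\delta$ of $\gamma^{-1}$ in general position with $\gamma$, so that $\lambda(\delta) = -w_0\,\lambda(\gamma)$, and proves the uniform estimate $\|\mu(\gamma^m \delta^n) - m\lambda(\gamma) - n\lambda(\delta)\| \leq C$. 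Taking $m = n$ produces infinitely many elements whose Cartan projections stay at \emph{bounded} distance from $\mathbb{R}_{\geq 0}\,\mathfrak{b}_+$; if $\mathfrak{b}_+ \subset W \cdot \mathfrak{a}_\mathfrak{h}$ this violates the drift criterion and hence properness. A secondary issue: you apply the limit cone theorem ``inside the Zariski closure of $F$,'' but that closure need not be reductive, so even the convexity and non-degeneracy of $\mathcal{L}_F$ require an additional reduction (to a Levi factor, with a comparison of Cartan projections) that your sketch omits.
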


\section{An example of $SL(5,\mathbb{R})$-spaces}\label{section:prove_main}

Let $G = SL(5,\mathbb{R})$ and take $\mathfrak{a}$, $\mathfrak{a}_+$ and $\mathfrak{b}_+$ as in Example \ref{ex:aa+b+_for_SL5}.
We write $\langle \ ,\ \rangle$ for the inner product on $\mathfrak{a}$ induced by the Killing form on $\mathfrak{g}$.
That is, we put
\[
\langle \diag(a_1,\dots,a_5), \diag(a'_1,\dots,a'_5) \rangle := \sum_{i=1}^{5} a_i \cdot a'_i \in \mathbb{R}
\]
for each $\diag(a_1,\dots,a_5), \diag(a'_1,\dots,a'_5) \in \mathfrak{a}$.

We define a $3$-dimensional abelian subalgebra $\mathfrak{h}$ by
\[
\mathfrak{h} := \{\, \diag(a_1,\dots,a_5) \in \mathfrak{a} \mid 6a_1+6a_2+a_3-4a_4-9a_5 = 0 \,\} \subset \mathfrak{a}.
\]
In other wards, $\mathfrak{h}$ is the orthogonal complement subspace of $\mathfrak{a}$ for $\diag(6,6,1,-4,-9)$.
Then
\[
H := \exp \mathfrak{h} \subset SL(5,\mathbb{R})
\]
is a split abelian subgroup of $SL(5,\mathbb{R})$ with $H \simeq \mathbb{R}^3$.
In this case, we take $\mathfrak{a}_\mathfrak{h}$ as $\mathfrak{h}$ itself (see for the notation in Section \ref{section:preliminary}).
By using Fact \ref{fact:Kobayashi_sl2} and Fact \ref{fact:Benoist},
we can reduce the proof of Theorem \ref{thm:main} to the following two lemmas:

\begin{lem}\label{lem:prove:b_+}
$\mathfrak{b}_+ \not \subset W \cdot \mathfrak{a}_\mathfrak{h}$.
\end{lem}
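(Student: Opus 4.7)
The plan is to exhibit a single explicit element of $\mathfrak{b}_+$ that fails to lie in $W\cdot\mathfrak{a}_\mathfrak{h}$.

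First I would reformulate the problem in terms of inner products. By construction, $\mathfrak{h}=\mathfrak{a}_\mathfrak{h}$ is the orthogonal complement in $\mathfrak{a}$ of $C := \diag(6,6,1,-4,-9)$ with respect to $\langle\ ,\ \rangle$, and since $W=\mathfrak{S}_5$ acts by permutation of diagonal entries, each translate $\sigma\cdot\mathfrak{h}$ is the hyperplane orthogonal to $\sigma\cdot C$. Hence for $B := \diag(b_1,b_2,0,-b_2,-b_1)\in\mathfrak{b}_+$, membership of $B$ in $W\cdot\mathfrak{h}$ is equivalent to the existence of some $\sigma\in\mathfrak{S}_5$ with $\langle\sigma B,C\rangle=0$. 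Writing $(c_1,\dots,c_5)=(6,6,1,-4,-9)$, a short calculation gives
\[
\langle \sigma B, C\rangle \;=\; \bigl(c_{\sigma(1)} - c_{\sigma(5)}\bigr)\, b_1 \;+\; \bigl(c_{\sigma(2)} - c_{\sigma(4)}\bigr)\, b_2.
\]

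Next I would specialize to $(b_1,b_2)=(4,1)$, i.e., $B:=\diag(4,1,0,-1,-4)\in\mathfrak{b}_+$, and show that $4\alpha+\beta\neq 0$ for every $\sigma\in\mathfrak{S}_5$, where $\alpha:=c_{\sigma(1)}-c_{\sigma(5)}$ and $\beta:=c_{\sigma(2)}-c_{\sigma(4)}$. The key arithmetic observation is that every difference $c_i-c_j$ lies in $\{0,\pm 5,\pm 10,\pm 15\}$, and vanishes only when $\{i,j\}=\{1,2\}$ (since those are the two indices where $c$ takes the value $6$). Consequently, if $\alpha\neq 0$ then $|4\alpha|\geq 20>15\geq|\beta|$, so $4\alpha+\beta$ cannot vanish. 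Thus $4\alpha+\beta=0$ forces $\alpha=\beta=0$; but $\alpha=0$ demands $\{\sigma(1),\sigma(5)\}=\{1,2\}$, while $\beta=0$ demands $\{\sigma(2),\sigma(4)\}=\{1,2\}$, and these are incompatible because $\{\sigma(1),\sigma(5)\}$ and $\{\sigma(2),\sigma(4)\}$ are automatically disjoint.

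I do not anticipate a serious obstacle: once the orthogonal-complement reformulation reduces the statement to a single arithmetic inequality, the whole argument hinges on engineering the ratio $b_1/b_2=4$ so that $4|\alpha|$ overshoots the maximum possible $|\beta|=15$ whenever $\alpha\neq 0$. Essentially any real $b_2/b_1\in[0,1]$ outside the finite set of "bad" slopes arising as $-\alpha/\beta$ would suffice, but the choice $(4,1)$ makes the verification especially transparent.
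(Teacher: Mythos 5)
Your proposal is correct and follows essentially the same strategy as the paper: exhibit a single element $\diag(b_1,b_2,0,-b_2,-b_1)\in\mathfrak{b}_+$ and check that no permutation of $(6,6,1,-4,-9)$ is orthogonal to it, the decisive point in both arguments being that $6$ is the only repeated entry, so the disjoint pairs $\{\sigma(1),\sigma(5)\}$ and $\{\sigma(2),\sigma(4)\}$ cannot both yield a zero difference. The only difference is the choice of witness: the paper takes $(b_1,b_2)=(\sqrt{2},1)$ and uses $\mathbb{Z}$-linear independence to force both differences to vanish, whereas you take the rational pair $(4,1)$ and rule out cancellation by the size estimate $4\cdot 5>15$; both verifications are valid.
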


\begin{lem} \label{lem:prove:sl2} 
For any Lie algebra homomorphism $\phi : \mathfrak{sl}(2,\mathbb{R}) \rightarrow \mathfrak{sl}(5,\mathbb{R})$, 
the element $A_\phi$ $($see Definition \ref{defn:Aphi} for the notation$)$ is in $W \cdot \mathfrak{a}_\mathfrak{h}$.
\end{lem}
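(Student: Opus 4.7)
The plan is to enumerate all possible values of $A_\phi$ and verify membership in $W \cdot \mathfrak{a}_\mathfrak{h}$ by a direct case analysis. By the Jacobson--Morozov theorem together with the classification of finite-dimensional representations of $\mathfrak{sl}(2,\mathbb{R})$, every Lie algebra homomorphism $\phi : \mathfrak{sl}(2,\mathbb{R}) \to \mathfrak{sl}(5,\mathbb{R})$ is, up to $SL(5,\mathbb{R})$-conjugation, determined by a partition $\lambda = (\lambda_1, \dots, \lambda_r)$ of $5$. For such a $\phi$, the diagonal entries of $A_\phi \in \mathfrak{a}_+$ are, as a multiset, $\bigsqcup_i \{\lambda_i - 1,\, \lambda_i - 3,\, \dots,\, -(\lambda_i - 1)\}$. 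Since there are exactly seven partitions of $5$, it suffices to verify the containment for the seven resulting elements of $\mathfrak{a}_+$.

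Next, I would simplify the defining equation of $\mathfrak{h}$. An element $\diag(a_1, \dots, a_5) \in \mathfrak{a}$ (so $\sum_i a_i = 0$) lies in $\mathfrak{h}$ precisely when $6a_1 + 6a_2 + a_3 - 4a_4 - 9a_5 = 0$; eliminating $a_3 = -(a_1 + a_2 + a_4 + a_5)$ reduces this to the cleaner identity $a_1 + a_2 = a_4 + 2 a_5$. Consequently, $A_\phi \in W \cdot \mathfrak{a}_\mathfrak{h}$ if and only if the multiset of eigenvalues of $A_\phi$ can be distributed among the slots $a_1, \dots, a_5$ so as to satisfy $a_1 + a_2 = a_4 + 2 a_5$.

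For each of the seven partitions I would then exhibit such an assignment. For the principal nilpotent (partition $(5)$, weights $\{4,2,0,-2,-4\}$), the grouping $a_1 + a_2 = 4 + (-2) = 2 = 2 + 2\cdot 0 = a_4 + 2a_5$ works, leaving $a_3 = -4$. For $(3,1,1)$ and $(2,1,1,1)$ one pairs the two nonzero weights into slots $1$ and $2$ and places the remaining zero eigenvalues into the other slots; for $(1,1,1,1,1)$ one has $A_\phi = 0 \in \mathfrak{h}$. The partitions $(4,1)$, $(3,2)$, and $(2,2,1)$ are handled analogously by inspection (e.g.\ for $(2,2,1)$, weights $\{1,1,0,-1,-1\}$, take $a_1+a_2 = 1+0 = 1 = -1 + 2\cdot 1 = a_4 + 2a_5$).

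The main obstacle is essentially organizational: one must confirm that the list of seven partitions is complete and produce a valid grouping in every case. The reduced identity $a_1 + a_2 = a_4 + 2 a_5$ is flexible enough that each check is routine, though the principal partition $(5)$, whose weights are the most spread out, is the one where the grouping is least immediate.
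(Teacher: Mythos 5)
Your proposal is correct and follows essentially the same route as the paper: the paper likewise enumerates the possible $A_\phi$ by the seven partitions of $5$ (its Table~\ref{table:sl5}) and verifies $A_\phi \in W \cdot \mathfrak{a}_\mathfrak{h}$ case by case, exhibiting for each a permutation of $\diag(6,6,1,-4,-9)$ orthogonal to $A_\phi$ (after cutting down to four cases using invariance under scalar multiplication), and your reformulation $a_1+a_2=a_4+2a_5$ is just a cleaner way of writing the same coordinate check. The two cases you leave to ``inspection'' do indeed work, e.g.\ the arrangement $(3,-1,-3,0,1)$ for $[4,1]$ and $(2,-1,-2,1,0)$ for $[3,2]$.
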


First, we shall show Lemma \ref{lem:prove:b_+} as follows:
\begin{proof}[Proof of Lemma \ref{lem:prove:b_+}]
Let us take $r_1,r_2 \in \mathbb{R}$ with $r_1 > r_2 > 0$ such that $r_1$ and $r_2$ are linearly independent over $\mathbb{Z}$.
For example, we can take $(r_1,r_2) = (\sqrt{2},1)$.
We shall prove that the element 
$
\diag(r_1,r_2,0,-r_2,-r_1)
$
of $\mathfrak{b}_+$ is not in $W \cdot \mathfrak{a}_\mathfrak{h}$.
If 
\[
\langle \diag(r_1,r_2,0,-r_2,-r_1), \diag(a_1,a_2,a_3,a_4,a_5) \rangle = 0
\]
for some $a_1,\dots,a_5 \in \mathbb{Z}$,
then we have $a_1 = a_5$ and $a_2 = a_4$ since $r_1$ and $r_2$ are linearly independent over $\mathbb{Z}$.
Hence 
\[
\langle \diag(r_1,r_2,0,-r_2,-r_1), \sigma \diag(6,6,1,-4,-9) \rangle \neq 0
\]
for any $\sigma \in \mathfrak{S}_5 = W$. 
Therefore, we obtain that \[
\diag(r_1,r_2,0,-r_2,-r_1) \not \in W \cdot \mathfrak{a}_\mathfrak{h}.
\]
\end{proof}

To describe the proof of Lemma \ref{lem:prove:sl2},
we use the next fact for the set $\{\, A_\phi \mid \phi \in \Hom(\mathfrak{sl}(2,\mathbb{R}),\mathfrak{sl}(5,\mathbb{R})) \,\}$,
where $\Hom(\mathfrak{sl}(2,\mathbb{R}),\mathfrak{sl}(5,\mathbb{R}))$ is the set of all Lie algebra homomorphisms 
from $\mathfrak{sl}(2,\mathbb{R})$ to $\mathfrak{sl}(5,\mathbb{R})$:

\begin{fact}\label{fact:table_sl2_sl5}
The set $\{\, A_\phi \mid \phi \in \Hom(\mathfrak{sl}(2,\mathbb{R}),\mathfrak{sl}(5,\mathbb{R})) \,\}$ $($see Definition $\ref{defn:Aphi}$ for the notation$)$ is parametrised by partitions of $5$ as in Table \ref{table:sl5}.
\begin{table}
\caption{The list of elements in $\{\, A_\phi \mid \phi \in \Hom(\mathfrak{sl}(2,\mathbb{R}),\mathfrak{sl}(5,\mathbb{R})) \,\}$}\label{table:sl5}
\begin{center}
\begin{tabular}{cc} \hline
Partition of $5$ & $A_\phi$ \\ \hline
$[5]$ & $\diag(4,2,0,-2,-4)$ \\
$[4,1]$ & $\diag(3,1,0,-1,-3)$ \\
$[3,2]$ & $\diag(2,1,0,-1,-2)$ \\
$[3,1^2]$ & $\diag(2,0,0,0,-2)$ \\
$[2^2,1]$ & $\diag(1,1,0,-1,-1)$ \\
$[2,1^3]$ & $\diag(1,0,0,0,-1)$ \\
$[1^5]$ & $\diag(0,0,0,0,0)$ \\ \hline
\end{tabular}
\end{center}
\end{table}
\end{fact}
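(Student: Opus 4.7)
The plan is to reduce this classification to the representation theory of $\mathfrak{sl}(2,\mathbb{R})$ on $\mathbb{R}^5$. A Lie algebra homomorphism $\phi : \mathfrak{sl}(2,\mathbb{R}) \to \mathfrak{sl}(5,\mathbb{R})$ is exactly a $5$-dimensional real representation of $\mathfrak{sl}(2,\mathbb{R})$: the image automatically has trace zero because $\mathfrak{sl}(2,\mathbb{R})$ is perfect, so every homomorphism into $\mathfrak{gl}(5,\mathbb{R})$ lands in $\mathfrak{sl}(5,\mathbb{R})$. By Definition \ref{defn:Aphi} together with the fact that $\phi(H_0)$ (for $H_0 := \begin{pmatrix} 1 & 0 \\ 0 & -1\end{pmatrix}$) is semisimple, $A_\phi$ depends only on the $\Ad(G)$-orbit of $\phi(H_0)$, which in turn is determined by the multiset of eigenvalues of $\phi(H_0)$.

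Next I would classify $5$-dimensional real representations of $\mathfrak{sl}(2,\mathbb{R})$ up to isomorphism. By complete reducibility and the fact that $\mathfrak{sl}(2,\mathbb{R})$ is split --- so that for each $d \geq 1$ there is a unique irreducible real representation $V_d$ of dimension $d$, namely $\text{Sym}^{d-1}$ of the standard $2$-dimensional representation --- every such $\phi$ decomposes as $V_{d_1} \oplus \cdots \oplus V_{d_k}$ with $d_1 \geq \cdots \geq d_k \geq 1$ and $d_1 + \cdots + d_k = 5$. This yields a bijection between partitions of $5$ and isomorphism classes of $\phi$.

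To conclude, I would compute $A_\phi$ for each partition using the standard weight formula. On $V_d$, the element $H_0$ acts with weights $d-1, d-3, \ldots, -(d-3), -(d-1)$, so for $\phi$ of type $[d_1, \ldots, d_k]$ the matrix $\phi(H_0)$ has eigenvalue multiset $\bigcup_{j=1}^{k} \{\, d_j - 1, d_j - 3, \ldots, -(d_j - 1) \,\}$, and $A_\phi$ is the diagonal matrix obtained by listing these eigenvalues in weakly decreasing order. Enumerating the seven partitions of $5$, namely $[5]$, $[4,1]$, $[3,2]$, $[3,1^2]$, $[2^2,1]$, $[2,1^3]$, $[1^5]$, and performing this sort produces exactly the seven rows of Table \ref{table:sl5}. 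A direct inspection then shows that the resulting seven diagonal matrices are pairwise distinct, so the induced map from partitions of $5$ to $\{\, A_\phi \,\}$ is a genuine bijection, not merely a surjection.

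There is no real obstacle here: the argument is a direct application of standard $\mathfrak{sl}(2)$-theory together with a short case enumeration. The only mild point of care is the identification of equivalence classes of $\phi$: conjugacy of representations is conjugacy by $GL(5,\mathbb{R})$, while the orbit in Definition \ref{defn:Aphi} is under $\Ad(SL(5,\mathbb{R}))$, but these produce the same orbits on $\mathfrak{sl}(5,\mathbb{R})$ (for $n=5$ odd, every $g \in GL(n,\mathbb{R})$ differs from an element of $SL(n,\mathbb{R})$ by a real scalar, whose conjugation action is trivial), so no issue arises.
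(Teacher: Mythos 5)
Your proposal is correct, and it is a legitimate, self-contained proof of the Fact. Note that the paper itself does not prove this statement directly: it derives it by citing Collingwood--McGovern (the partition classification of nilpotent orbits in $\mathfrak{sl}(5)$ via Jordan normal form, Chapter 3.6, combined with Theorem 9.3.3 and the remarks after Theorem 9.3.5 on real forms and weighted Dynkin diagrams). That route goes through nilpotent orbits and the Jacobson--Morozov theorem, whereas you bypass nilpotent orbits entirely and argue directly from the statement's own terms: a homomorphism $\phi \in \Hom(\mathfrak{sl}(2,\mathbb{R}),\mathfrak{sl}(5,\mathbb{R}))$ is a $5$-dimensional real representation, complete reducibility and the uniqueness of the $d$-dimensional irreducible real representation of the split algebra $\mathfrak{sl}(2,\mathbb{R})$ reduce everything to partitions of $5$, and $A_\phi$ is read off from the sorted weight multiset of $\phi\begin{pmatrix}1&0\\0&-1\end{pmatrix}$. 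Your computations of the seven weight multisets all match Table \ref{table:sl5}, and you correctly handle the one genuine subtlety --- that $A_\phi$ depends only on the $\Ad(SL(5,\mathbb{R}))$-orbit while representation equivalence is $GL(5,\mathbb{R})$-conjugacy, which coincide here since $5$ is odd. Your argument is in substance the content hidden inside the paper's citation, but it is more elementary and more directly adapted to the statement; the citation-based route has the advantage of generalizing immediately to other classical groups and of connecting with the nilpotent-orbit bijection mentioned in the paper's Remark.
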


Fact \ref{fact:table_sl2_sl5} can be obtained by combining
\cite[Chapter 3.6]{Collingwood-McGovern93} with 
{\cite[Theorem 9.3.3 and remarks after Theorem 9.3.5]{Collingwood-McGovern93}}.

\begin{proof}[Proof of Lemma \ref{lem:prove:sl2}]
By Fact \ref{fact:table_sl2_sl5}, 
we can observe that $A_\phi$ 
can be written by a scalar multiple of 
$\diag(3,1,0,-1,-3)$, $\diag(2,1,0,-1,-2)$, $\diag(1,1,0,-1,-1)$ or $\diag(1,0,0,0,-1)$ 
for any $\phi \in \Hom(\mathfrak{sl}(2,\mathbb{R}),\mathfrak{sl}(5,\mathbb{R}))$. 
Here, we compute the inner-products that 
\begin{align*}
\langle \diag(3,1,0,-1,-3), \diag(6,-9,-4,6,1) \rangle &= 0,\\
\langle \diag(2,1,0,-1,-2), \diag(6,-4,-9,6,1) \rangle &= 0,\\
\langle \diag(1,1,0,-1,-1), \diag(6,-9,6,-4,1) \rangle &= 0,\\
\langle \diag(1,0,0,0,-1), \diag(6,-9,-4,1,6)  \rangle &= 0.
\end{align*}
This means that these are in $W \cdot \mathfrak{a}_\mathfrak{h}$.
This completes the proof.
\end{proof}

Then we complete the proof of Theorem \ref{thm:main}.

\begin{rem}
For semisimple symmetric pairs $(\mathfrak{g},\mathfrak{h})$, 
the subset $\mathfrak{a}_+ \cap \Ad(G) (\mathfrak{p} \cap \mathfrak{h})$ of $\mathfrak{a}$ is a convex cone $($see \cite[Lemma 4.9]{Okuda13cls} and its proof$)$. 
This was one of key lemmas for the
proof of the main results for symmetric spaces in \cite{Okuda13cls},
however, owing to Theorem \ref{thm:main}, 
we see that an analogous statement to
\cite[Lemma 4.9]{Okuda13cls} does not hold 
for the above non-symmetric pair $(G,H)$.
\end{rem}

{\bf Final remark.}
For a semisimple symmetric space $G/H$,
it was also proved in \cite{Okuda13cls} that 
$G/H$ admits a discontinuous group $\Gamma$ which is 
isomorphic to a surface group 
(i.e.~the fundamental group of a closed Riemann surface of genus $g \geq 2$)
if and only if $G/H$ admits a proper action of $SL(2,\mathbb{R})$.
I do not know if such a statement holds in general when $G/H$ is non-symmetric.

\section*{Acknowledgements.}
To discover the example, 
the author owes much to Yves Benoist for helpful discussions
during his visit to Tokyo in 2012.
The main result of this paper was announced 
in Symposium on Representation Theory
organized by Hiroyuki Ochiai and Minoru Ito at Kagoshima on December 2012.
The author also would like to give warm thanks to Toshiyuki Kobayashi
whose comments were of inestimable value for this paper.

\def\cprime{$'$}
\providecommand{\bysame}{\leavevmode\hbox to3em{\hrulefill}\thinspace}
\providecommand{\MR}{\relax\ifhmode\unskip\space\fi MR }
\providecommand{\MRhref}[2]{%
  \href{http://www.ams.org/mathscinet-getitem?mr=#1}{#2}
}
\providecommand{\href}[2]{#2}

\end{document}